\newtheorem{thm}{Theorem}
\newtheorem{cor}[thm]{Corollary}
\newtheorem{lem}[thm]{Lemma}
\newtheorem{prop}[thm]{Proposition}
\theoremstyle{definition}
\theoremstyle{remark}
\newtheorem{rem}[thm]{Remark}
\newcommand{\F}{\mathcal{F}}
\newcommand{\Pm}{\mathrm{Prim}}
\newcommand{\mset}{\emptyset}
\newcommand{\Fa}{\mathrm{Fac}}
\newcommand{\Pme}{\mathrm{Prime}}
\begin{document}
\baselineskip=18pt
\title{The space of ideals in the minimal tensor product of $C^*$-algebras}
\author{Aldo J. Lazar}
\address{School of Mathematical Sciences\\
         Tel Aviv University\\
         Tel Aviv 69778, Israel}
\email{aldo@post.tau.ac.il}

\thanks{}%
\subjclass{46L06}
\keywords{closed two sided ideal, minimal tensor product}
\date{March 5, 2009}%
%\commby{}%
% ---------------------------------------------------------------
\begin{abstract}

   For $C^*$-algebras $A_1, A_2$ the map $(I_1,I_2)\to \mathrm{ker}(q_{I_1}\otimes q_{I_2})$ from
   $\mathrm{Id}^{\prime}(A_1)\times \mathrm{Id}^{\prime}(A_2)$ into $\mathrm{Id}^{\prime}(A_1\otimes_{\mathrm{min}} A_2)$
   is a homeomorphism
   onto its image which is dense in the range. Here, for a $C^*$-algebra $A$, the space of all proper closed two sided ideals
   endowed with an adequate topology is denoted $\mathrm{Id}^{\prime}(A)$ and $q_I$ is the quotient map of $A$ onto $A/I$.
   New proofs of the equivalence of the property (F) of Tomiyama for
   $A_1\otimes_{\mathrm{min}} A_2$ with certain other properties are presented.

\end{abstract}

\maketitle
% -----------------------------------------------------------
-----------------------------------
\section{Introduction} \label{S:I}

The relationship between the representations and the kernels of representations of the minimal tensor product of two
$C^*$-algebras and those of the factors has been a substantial topic of study since the inception of this method of
building new algebras, as one can see for instance from \cite{W}, \cite{T}, or \cite{G}. Starting with closed two
sided ideals $I_1, I_2$ of the $C^*$-algebras $A_1, A_2$, respectively, one looks at the ideals $I :=
I_1\otimes_{\mathrm{min}} A_2 + A_1\otimes_{\mathrm{min}} I_2$ of $A_1\otimes_{\mathrm{min}} A_2$ and
$\mathrm{ker}(q_{I_1}\otimes q_{I_2})$ which obviously contains $I$. In particular, it is interesting to see what
happens when $I_1$ and $I_2$ are primitive or prime ideals. It is also of interest to know if the map between the
pairs of ideals and the ideals of the tensor product has any continuity properties.

When the primitive ideal spaces are given their hull-kernel topologies it was proved in \cite{W} that the map
$(I_1,I_2)\to \mathrm{ker}(q_{I_1}\otimes q_{I_2})$ is a homeomorphism onto its image. Similar results, under the
assumption that the tensor product enjoys the property (F) of Tomiyama,  were obtained in \cite{H} and \cite{BK} for
the spaces of the kernels of factorial representations and the spaces of prime ideals. We consider here the spaces
of all the proper closed two-sided ideals with their $\tau_w$ topologies, as defined in \cite{A}, and show in
Theorem \ref{T:home} that the same map is a homeomorphism. This establishes, in the most general situation, that the
maps between the spaces of kernels of factorial representations and between the spaces of prime ideals, with their
hull-kernel topologies, are also homeomorphisms.

In section \ref{S:F} we give new proofs, possibly simpler, to the equivalence of the property (F) with various other
properties of the minimal tensor product. For instance, we show that property (F) is equivalent to having every
closed two sided ideal of the minimal tensor product equal to the closed linear span of the elementary tensors
contained in the ideal, a fact first proved in \cite[Proposition 2.16]{BK}. In \cite[Proposition 4.5]{Al} it is
proved that every non-zero closed two sided ideal of any minimal tensor product of two $C^*$-algebras must contain a
non trivial elementary tensor and this fact plays an important role in our proofs, via \cite[Lemma 2.12]{BK}.

By an ideal of a Banach algebra we shall always mean a closed two sided ideal. The set of all the ideals of the
Banach algebra $A$ is denoted by $\mathrm{Id}(A)$ and $\mathrm{Id}^{\prime}(A) := \mathrm{Id}(A)\setminus \{A\}$.
For an ideal $I$ of a Banach algebra $A$ we denote by $q_I$ the quotient homomorphism of $A$ onto $A/I$. Given a
$C^*$-algebra $A$, the set of its primitive ideals, the set of the kernels of its factorial representations, and the
set of its prime ideals are denoted $\Pm(A)$, $\Fa(A)$, and $\Pme(A)$, respectively. Their topology will everywhere
be the hull-kernel topology.

Given a topological space $X$, we shall endow the space of all its closed subsets, $\F(X)$, with the topology for
which a subbase is the collection of all the families $\{F\in \F(X)\mid F\cap O\neq \mset\}$, when $O$ runs through
all the open subsets of $X$.

If $A$ is a $C^*$-algebra the $\tau_w$ topology of $\mathrm{Id}(A)$ is defined by transporting the topology of
$\F(\Pm(A))$ to $\mathrm{Id}(A)$ via the well known correspondence between the closed subsets of $\Pm(A)$ and the
ideals of $A$. Thus a subbase for this topology is given by all the sets $\{I\in \mathrm{Id}(A)\mid J\nsubseteq I\}$
where $J$ is an ideal of $A$, see \cite[p.525]{A}.

$A_1$, $A_2$ being $C^*$-algebras, $A_1\odot A_2$ stands for their algebraic tensor product and $A_1\otimes A_2$ for
their minimal tensor product which will be the only $C^*$-tensor product discussed here. If $B_1$, $B_2$ are also
$C^*$-algebras and $\pi_1 : A_1\to B_1$, $\pi_2 : A_2\to B_2$ are homomorphisms then $\pi_1\otimes \pi_2 ;
A_1\otimes A_2\to B_1\otimes B_2$ is the unique homomorphism determined by $(\pi_1\otimes \pi_2)(a_1\otimes a_2) =
\pi_1(a_1)\otimes \pi_2(a_2)$ for every $a_1\in A_1$, $a_2\in A_2$. The map $\Phi : \mathrm{Id}(A_1)\times
\mathrm{Id}(A_2) \to \mathrm{Id}(A_1\otimes A_2)$ is defined by $\Phi(I_1,I_2) := \mathrm{ker}(q_{I_1}\otimes
q_{I_2})$.

Very useful for the study of the structure of $A_1\otimes A_2$ are the slice maps defined by Tomiyama in \cite{T}.
For a bounded linear functional $f$ on $A_2$, the bounded linear left slice map $L_f : A_1\otimes A_2\to A_1$ is
defined by requiring $L_f(a_1\otimes a_2) := f(a_2)a_1$ for each $a_1\in A_1$, $a_2\in A_2$.

Following \cite{T} one says that $A_1\otimes A_2$ has the property (F) if the set of all product states $f_1\otimes
f_2$, $f_1$ and $f_2$ being pure states of $A_1$ and $A_2$, respectively, separates the ideals of $A_1\otimes A_2$.
Equivalently, by \cite[Theorem 5]{T}, for every ideal $I_1$ of $A_1$ and $I_2$ of $A_2$ one has $\Phi(I_1,I_2) =
I_1\otimes A_2 + A_1\otimes I_2$. It is this property of the ideals that we shall use everywhere in the paper when
we refer to the property (F).

The author is very grateful to Professor R. J. Archbold for pointing out a mistake in an earlier version of this
paper at the same time with providing the way to correct it and for suggesting many other improvements.

\section{The space of ideals of $A_1\otimes A_2$} \label{S:Id}

\begin{lem} \label{L:hyper}

   Let $X_1, X_2$ be topological spaces. The map $\varphi : \F(X_1)\times \F(X_2)\to \F(X_1\times X_2)$ given by
   $\varphi(F_1,F_2) := F_1\times F_2$ is continuous.

\end{lem}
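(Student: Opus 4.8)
The plan is to reduce continuity to a check on a subbase of the codomain. Writing $\langle U\rangle := \{G\in\F(X_1\times X_2)\mid G\cap U\neq\mset\}$ for the subbasic open set attached to an open set $U\subseteq X_1\times X_2$, it suffices, since preimages respect unions and finite intersections, to show that $\varphi^{-1}(\langle U\rangle)$ is open in $\F(X_1)\times\F(X_2)$ for each such $U$. I would therefore fix $U$ together with an arbitrary pair $(F_1,F_2)$ satisfying $(F_1\times F_2)\cap U\neq\mset$, and then exhibit a basic product neighborhood of $(F_1,F_2)$ lying inside the preimage.

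The key step is to exploit the product topology on $X_1\times X_2$. Choosing a point $(x_1,x_2)\in(F_1\times F_2)\cap U$, openness of $U$ furnishes an open rectangle, i.e. open sets $O_1\subseteq X_1$ and $O_2\subseteq X_2$ with $(x_1,x_2)\in O_1\times O_2\subseteq U$. The candidate neighborhood is the product of the two hit sets $\{F_1'\in\F(X_1)\mid F_1'\cap O_1\neq\mset\}$ and $\{F_2'\in\F(X_2)\mid F_2'\cap O_2\neq\mset\}$, which is a basic open subset of $\F(X_1)\times\F(X_2)$ and contains $(F_1,F_2)$ because $x_1\in F_1\cap O_1$ and $x_2\in F_2\cap O_2$. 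For the inclusion into the preimage I would check directly that whenever $F_1'$ meets $O_1$ in some $y_1$ and $F_2'$ meets $O_2$ in some $y_2$, the point $(y_1,y_2)$ lies in $(F_1'\times F_2')\cap(O_1\times O_2)\subseteq(F_1'\times F_2')\cap U$, so that $\varphi(F_1',F_2')=F_1'\times F_2'\in\langle U\rangle$.

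I do not anticipate a genuine obstacle; the only point that requires attention is that the hit condition on a product $F_1\times F_2$ against a general open $U$ does not split coordinatewise until one passes to a rectangular neighborhood of a chosen point of the intersection. Committing at the outset to subbasic open sets of the codomain is precisely what makes this rectangle-finding happen exactly once and keeps the verification routine; dealing instead with an arbitrary basic open set of $\F(X_1\times X_2)$, which is a finite intersection of hit sets, would merely force the same step to be repeated and would add no content.
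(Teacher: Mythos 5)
Your proof is correct and follows essentially the same route as the paper's: both reduce to subbasic hit sets $\{G\mid G\cap U\neq\mset\}$ and exploit that $U$ is a union of open rectangles $O_1\times O_2$, for which the hit condition on $F_1\times F_2$ splits coordinatewise. The only difference is presentational — the paper states the preimage as a single set identity, namely a union over the rectangles of products of hit sets, whereas you verify openness pointwise; the content is identical.
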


\begin{proof}

   Let $U$ be an open subset of $X_1\times X_2$, $U = \cup (U^1_{\iota}\times
   U^2_{\iota})$ with $U^j_{\iota}$ open subsets of $X_j$. Then
    \begin{multline*}
       \{(F_1,F_2)\in \F(X_1)\times \F(X_2) \mid \varphi(F_1,F_2)\cap U\neq \mset\} =\\
                                                         \cup (\{F_1\in \F(X_1) \mid F_1\cap
       U^1_{\iota} \neq \mset\}\times \{F_2\in \F(X_2) \mid F_2\cap U^2_{\iota} \neq \mset\}).
    \end{multline*}
   This is an open subset of $\F(X_1)\times \F(X_2)$ and the continuity of $\varphi$ is established.

\end{proof}

From here on we let $A_1, A_2$ be $C^*$-algebras and $\Phi$ will be as defined in the introduction. Then $\Phi$ maps
$\Pm(A_1)\times \Pm(A_2)$ into $\Pm(A_1\otimes A_2)$ by \cite{W}, $\Phi(\Fa(A_1)\times \Fa(A_2))\subset
\Fa(A_1\otimes A_2)$ by \cite[p. 6]{G}, and $\Phi(\Pme(A_1)\times \Pme(A_2))\subset \Pme(A_1\otimes A_2)$ by
\cite[Lemma 2.13(v)]{BK}.

\begin{lem} \label{L:intersect}

   Let $\mathcal{M}_k$ be a family of ideals of $A_k$ and
    \[
     I_k := \cap_{J\in \mathcal{M}_k} J
    \]
   for $k = 1,2$. Then $\Phi(I_1,I_2) = \cap \{\Phi(J_1,J_2) \mid (J_1,J_2)\in \mathcal{M}_1\times \mathcal{M}_2\}$.

\end{lem}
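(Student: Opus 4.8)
The plan is to replace the kernel $\Phi(I_1,I_2)=\mathrm{ker}(q_{I_1}\otimes q_{I_2})$ by a slice–map description, which converts the assertion into a statement about annihilators. Writing $I_2^{\perp}:=\{f\in A_2^*\mid f|_{I_2}=0\}$, I would first establish the auxiliary identity
\begin{equation*}
   \Phi(I_1,I_2)=\{x\in A_1\otimes A_2\mid L_f(x)\in I_1\ \text{for every}\ f\in I_2^{\perp}\}. \tag{$\star$}
\end{equation*}
This follows by a short diagram chase. An element $y$ of a minimal tensor product $C\otimes D$ vanishes exactly when $g'(L_{f'}(y))=(g'\otimes f')(y)=0$ for all $g'\in C^*$, $f'\in D^*$, since the product functionals separate the points of $C\otimes D$. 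Applying this to $C=A_1/I_1$, $D=A_2/I_2$ and $y=(q_{I_1}\otimes q_{I_2})(x)$, and using the intertwining relation $L_{f'}\big((q_{I_1}\otimes q_{I_2})(x)\big)=q_{I_1}\big(L_{f'\circ q_{I_2}}(x)\big)$ together with the identification of $(A_2/I_2)^*$ with $I_2^{\perp}$ via $f'\mapsto f'\circ q_{I_2}$, one arrives at $(\star)$.

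Granting $(\star)$, the inclusion $\Phi(I_1,I_2)\subseteq\bigcap\{\Phi(J_1,J_2)\}$ is immediate by monotonicity: for $(J_1,J_2)\in\mathcal M_1\times\mathcal M_2$ the relation $I_2\subseteq J_2$ forces $J_2^{\perp}\subseteq I_2^{\perp}$, so if $x\in\Phi(I_1,I_2)$ and $f\in J_2^{\perp}$ then $L_f(x)\in I_1\subseteq J_1$, i.e.\ $x\in\Phi(J_1,J_2)$. For the reverse inclusion I would take $x$ in the intersection, fix $J_1\in\mathcal M_1$, and consider $N:=\{f\in A_2^*\mid L_f(x)\in J_1\}$. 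Since $x\in\Phi(J_1,J_2)$ for every $J_2\in\mathcal M_2$, the hypothesis gives $J_2^{\perp}\subseteq N$ for all such $J_2$, whence $\bigcup_{J_2}J_2^{\perp}\subseteq N$. The goal is to enlarge this to $I_2^{\perp}=\big(\bigcap_{J_2}J_2\big)^{\perp}=\overline{\operatorname{span}}^{\,w^*}\bigcup_{J_2}J_2^{\perp}$, the last equality being the standard duality between intersections of closed subspaces and the weak-$*$ closed span of their annihilators.

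The crux, and the step I expect to cost the most care, is therefore to show that $N$ is a weak-$*$ closed subspace. Linearity is clear, as $f\mapsto L_f(x)$ is linear and $J_1$ is a subspace. For closedness I would note that for each fixed $g\in A_1^*$ one has $g(L_f(x))=(g\otimes f)(x)=f(R_g(x))$, where $R_g$ is the right slice map determined by $R_g(a_1\otimes a_2)=g(a_1)a_2$; thus $f\mapsto g(L_f(x))$ is evaluation at the fixed element $R_g(x)\in A_2$ and hence weak-$*$ continuous, which means $f\mapsto L_f(x)$ is continuous from $(A_2^*,w^*)$ into $A_1$ equipped with its weak topology. Since $J_1$ is norm closed and convex it is weakly closed, so $N$, being the preimage of $J_1$ under this map, is weak-$*$ closed. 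Consequently $N\supseteq I_2^{\perp}$, that is $L_f(x)\in J_1$ for every $f\in I_2^{\perp}$; letting $J_1$ range over $\mathcal M_1$ yields $L_f(x)\in\bigcap_{J_1}J_1=I_1$ for all $f\in I_2^{\perp}$, and $(\star)$ then gives $x\in\Phi(I_1,I_2)$. (Alternatively one could embed $A_1/I_1$ and $A_2/I_2$ into the $C^*$-products $\prod_{J_1}A_1/J_1$ and $\prod_{J_2}A_2/J_2$ and use that in a block-diagonal faithful representation of such products the minimal tensor product consists of block-diagonal operators, so the coordinate slices $q_{J_1}\otimes q_{J_2}$ are jointly faithful; the slice-map argument seems cleaner.)
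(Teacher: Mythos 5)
Your proof is correct, and it takes a genuinely different route from the paper's. The paper argues via $C^*$-seminorms: for each pair $(J_1,J_2)$ it uses the canonical surjection $\theta_{J_1,J_2}\colon (A_1/I_1)\otimes(A_2/I_2)\to (A_1/J_1)\otimes(A_2/J_2)$, forms the seminorm $N(x):=\sup_{(J_1,J_2)}\|\theta_{J_1,J_2}(x)\|$, observes that $N$ is strictly positive on nonzero elementary tensors (because $I_k=\cap_{J\in\mathcal M_k}J$ makes the quotient maps jointly injective on $A_k/I_k$), and then invokes \cite[Lemma 2.12(i)]{BK} --- a $C^*$-seminorm on a minimal tensor product that does not vanish on nonzero elementary tensors dominates the minimal norm --- to get $N(x)\geq\|x\|$, which is precisely the nontrivial inclusion $\supseteq$. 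You instead establish the slice-map/annihilator characterization $(\star)$ of $\ker(q_{I_1}\otimes q_{I_2})$ and reduce the lemma to Banach-space duality: $\bigl(\bigcap J_2\bigr)^{\perp}$ is the weak-$*$ closed span of $\bigcup J_2^{\perp}$, and your set $N=\{f\in A_2^*\mid L_f(x)\in J_1\}$ absorbs that closure because $f\mapsto L_f(x)$ is $w^*$-to-weak continuous (via $g(L_f(x))=f(R_g(x))$) and $J_1$ is weakly closed --- a point you rightly identify as the crux, and which you handle correctly. What your route buys is independence from the Blanchard--Kirchberg lemma: the only $C^*$-ingredients are Tomiyama's slice maps and the fact that product functionals separate points of the minimal tensor product, both elementary consequences of the definition of the minimal norm; it also isolates the identity $(\star)$, which is of independent interest (it is in effect the reason the map $\Phi$ interacts well with intersections). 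What the paper's route buys is brevity and economy: it reuses \cite[Lemma 2.12]{BK}, which is needed elsewhere in the paper anyway, and it avoids all the duality bookkeeping (bipolar theorem, weak-$*$ closures) that forms the technical core of your argument. Finally, note that your parenthetical alternative --- embedding $A_k/I_k$ into $\prod_{J_k}A_k/J_k$ and using block-diagonal representations to see that the maps $q_{J_1}\otimes q_{J_2}$ are jointly faithful --- is actually the closer cousin of the paper's proof: it amounts to verifying directly, in this special case, the seminorm inequality that the paper obtains by citation.
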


\begin{proof}

   For $J_1\in \mathcal{M}_1$ and $J_2\in \mathcal {M}_2$ there exists a natural homomorphism $\theta_{J_1,J_2}$ of
   $(A_1/I_1)\otimes (A_2/I_2)$ onto $(A_1/J_1)\otimes (A_2/J_2)$; one defines it in the obvious way on
   $(A_1/I_1)\odot (A_2/I_2)$ and then one extends it by continuity. This homomorphism satisfies
   $\theta_{J_1,J_2}\circ (q_{I_1}\otimes q_{I_2}) = q_{J_1}\otimes q_{J_2}$ and from this we infer $\Phi(I_1,I_2)\subset
   \Phi(J_1,J_2)$. Define a seminorm on $(A_1/I_1)\otimes (A_2/I_2)$ by
    \[
     N(x) := sup\{\|\theta_{J_1,J_2}(x)\| \mid (J_1,J_2)\in \mathcal{M}_1\times \mathcal{M}_2\}.
    \]
   Then, if $x_1\in A_1/I_1$ and $x_2\in A_2/I_2$ with $x_1\otimes x_2\neq 0$ we have $N(x_1\otimes x_2) > 0$. By
   \cite[Lemma 2.12(i)]{BK} we have $N(x)\geq \|x\|$ for $x\in (A_1/I_1)\otimes (A_2/I_2)$. Hence, if for $a\in
   A_1\otimes A_2$ we have $(q_{J_1}\otimes q_{J_2})(a) = 0$ for each $(J_1,J_2)\in \mathcal{M}_1\times
   \mathcal{M}_2$ then $a\in \Phi(I_1,I_2)$ and we have proved $\Phi(I_1,I_2)\supset \cap \{\Phi(J_1,J_2) \mid
   (J_1,J_2)\in \mathcal{M}_1\times \mathcal{M}_2\}$.

\end{proof}

\begin{cor} \label{C:hulls}

   Let $I_1, I_2$ be ideals in $A_1, A_2$ respectively. Then $\Phi(\mathrm{hull} I_1\times \mathrm{hull} I_2)$ is dense in
   $\mathrm{hull} \Phi(I_1,I_2)$.

\end{cor}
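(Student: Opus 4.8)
The plan is to identify the hull-kernel closure of $\Phi(\mathrm{hull}\,I_1\times \mathrm{hull}\,I_2)$ inside $\Pm(A_1\otimes A_2)$ and show that it coincides with $\mathrm{hull}\,\Phi(I_1,I_2)$. Recall that $\mathrm{hull}\,I_k=\{P\in \Pm(A_k)\mid I_k\subseteq P\}$, and that $\Phi$ carries $\Pm(A_1)\times \Pm(A_2)$ into $\Pm(A_1\otimes A_2)$ by \cite{W}, so $\Phi(\mathrm{hull}\,I_1\times \mathrm{hull}\,I_2)$ is genuinely a subset of $\Pm(A_1\otimes A_2)$ and it makes sense to ask whether it is dense in the closed set $\mathrm{hull}\,\Phi(I_1,I_2)$. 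Two ingredients drive the argument: the monotonicity of $\Phi$ already implicit in the proof of Lemma \ref{L:intersect}, and the standard description of closures in the hull-kernel topology.

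First I would record the inclusion $\Phi(\mathrm{hull}\,I_1\times \mathrm{hull}\,I_2)\subseteq \mathrm{hull}\,\Phi(I_1,I_2)$. If $P_k\in \mathrm{hull}\,I_k$ then $I_k\subseteq P_k$, so the homomorphism $\theta_{P_1,P_2}$ constructed in Lemma \ref{L:intersect} yields $\Phi(I_1,I_2)\subseteq \Phi(P_1,P_2)$; since $\Phi(P_1,P_2)$ is primitive, it lies in $\mathrm{hull}\,\Phi(I_1,I_2)$. Because $\mathrm{hull}\,\Phi(I_1,I_2)$ is closed, the closure of the left-hand set is contained in it as well, which is one half of the assertion of density.

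The substance is the reverse inclusion. In the hull-kernel topology the closure of a set $S\subseteq \Pm(A_1\otimes A_2)$ is $\mathrm{hull}$ of its kernel $\cap_{P\in S}P$, so I would compute the kernel of $\Phi(\mathrm{hull}\,I_1\times \mathrm{hull}\,I_2)$, namely $\cap\{\Phi(P_1,P_2)\mid (P_1,P_2)\in \mathrm{hull}\,I_1\times \mathrm{hull}\,I_2\}$. This is exactly the situation of Lemma \ref{L:intersect} with $\mathcal{M}_k=\mathrm{hull}\,I_k$. Since every ideal of a $C^*$-algebra equals the intersection of the primitive ideals containing it, one has $\cap_{P\in \mathrm{hull}\,I_k}P=I_k$, and Lemma \ref{L:intersect} collapses the intersection of the $\Phi(P_1,P_2)$ to $\Phi(I_1,I_2)$. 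Hence the closure in question is $\mathrm{hull}\,\Phi(I_1,I_2)$, completing the proof of density.

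The only point requiring care — and really the crux — is recognizing that Lemma \ref{L:intersect} applies verbatim with the two families taken to be the full hulls of $I_1$ and $I_2$: its conclusion, combined with the elementary identity $I_k=\cap\{P\in \Pm(A_k)\mid I_k\subseteq P\}$, is precisely what converts the kernel of the image set back into $\Phi(I_1,I_2)$. Everything else is bookkeeping with the definition of the hull-kernel closure. The degenerate case $I_k=A_k$, where $\mathrm{hull}\,I_k=\mset$ and $\Phi(I_1,I_2)=A_1\otimes A_2$, reduces to the trivial statement that $\mset$ is dense in $\mset$ and needs no separate treatment.
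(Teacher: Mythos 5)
Your proof is correct and follows essentially the same route as the paper's: apply Lemma \ref{L:intersect} with $\mathcal{M}_k=\mathrm{hull}\,I_k$ together with the identity $I_k=\cap_{P\in\mathrm{hull}\,I_k}P$, then invoke the hull-kernel description of closures. The paper compresses this into one line (``from this the conclusion follows''); you have merely made explicit the containment step and the closure computation that the paper leaves to the reader.
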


\begin{proof}

   $I_k = \cap_{J\in \mathrm{hull} I_k} J$ hence, by Lemma \ref{L:intersect},
    \[
     \Phi(I_1,I_2) = \cap \{\Phi(J_1,J_2) \mid (J_1,J_2)\in \mathrm{hull} I_1\times \mathrm{hull} I_2\}
    \]
   and from this the conclusion follows.

\end{proof}

Let us denote by $\Phi^{\prime}$ the restriction of $\Phi$ to $\Pm(A_1)\times \Pm(A_2)$. Recall that $\Phi^{\prime}$
is a homeomorphism of $\Pm(A_1)\times \Pm(A_2)$ onto its image which is dense in $\Pm(A_1\otimes A_2)$, see
\cite[Lemme 16]{W}.

\begin{rem} \label{R:Phi}

   Corollary \ref{C:hulls} tells us that for $(I_1,I_2)\in \mathrm{Id}(A_1)\times \mathrm{Id}(A_2)$ we have
   $\Phi(I_1,I_2) = \cap \{\Phi^{\prime}(P_1,P_2) \mid (P_1,P_2)\in \mathrm{hull} I_1\times \mathrm{hull} I_2\}$.

\end{rem}

Now we are ready to prove the continuity of the map $\Phi$. This was proved in \cite[Lemma 1.5]{K} under the
assumption that $A_1\otimes A_2$ has property $(F)$.

\begin{lem} \label{L:cont}

   The map $\Phi$ is continuous.

\end{lem}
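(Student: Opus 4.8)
The plan is to show that $\Phi$ is continuous by checking that the preimage of a subbasic open set is open. Recall that the $\tau_w$ topology on $\mathrm{Id}(A_1 \otimes A_2)$ has as a subbase the sets of the form $\mathcal{U}_J := \{K \in \mathrm{Id}(A_1 \otimes A_2) \mid J \nsubseteq K\}$ as $J$ ranges over ideals of $A_1 \otimes A_2$. So it suffices to prove that $\Phi^{-1}(\mathcal{U}_J)$ is open in $\mathrm{Id}(A_1) \times \mathrm{Id}(A_2)$ for every ideal $J$ of $A_1 \otimes A_2$.

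The natural strategy is to exploit the description of the $\tau_w$ topology as a transport of the topology on $\mathcal{F}(\mathrm{Prim}(A_1 \otimes A_2))$, together with the facts already assembled in this section. First I would use the homeomorphism $\Phi'$ of $\mathrm{Prim}(A_1) \times \mathrm{Prim}(A_2)$ onto a dense subspace of $\mathrm{Prim}(A_1 \otimes A_2)$, and Lemma~\ref{L:hyper}, which gives continuity of the product map $\varphi$ on spaces of closed sets. The key identity to leverage is Remark~\ref{R:Phi}: for $(I_1, I_2)$ one has $\Phi(I_1, I_2) = \cap \{\Phi'(P_1,P_2) \mid (P_1,P_2) \in \mathrm{hull}\, I_1 \times \mathrm{hull}\, I_2\}$. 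Under the ideal-to-hull correspondence, the ideal $\Phi(I_1,I_2)$ corresponds to the closure in $\mathrm{Prim}(A_1 \otimes A_2)$ of the set $\Phi'(\mathrm{hull}\, I_1 \times \mathrm{hull}\, I_2)$, which by density of $\Phi'$ is the closure of the image under $\Phi'$ of the product of the two hulls. Thus the assignment $(I_1,I_2) \mapsto \mathrm{hull}\,\Phi(I_1,I_2)$ factors, modulo taking closures, through the composite of $(I_1,I_2) \mapsto (\mathrm{hull}\, I_1, \mathrm{hull}\, I_2)$, the product map $\varphi$ of Lemma~\ref{L:hyper} into $\mathcal{F}(\mathrm{Prim}(A_1) \times \mathrm{Prim}(A_2))$, and the homeomorphism induced by $\Phi'$.

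Concretely, the condition $J \nsubseteq \Phi(I_1,I_2)$ translates, via the correspondence between ideals and hulls, into the statement that $\mathrm{hull}\, J \cap \mathrm{hull}\,\Phi(I_1,I_2)$ is a proper subset of $\mathrm{hull}\, J$, i.e.\ that $\mathrm{hull}\,\Phi(I_1,I_2)$ meets the open set $O := \mathrm{Prim}(A_1\otimes A_2)\setminus \mathrm{hull}\,J$. This is exactly a subbasic condition in the topology on $\mathcal{F}(\mathrm{Prim}(A_1 \otimes A_2))$. Since $\mathrm{hull}\,\Phi(I_1,I_2)$ is the closure of $\Phi'(\mathrm{hull}\, I_1 \times \mathrm{hull}\, I_2)$, it meets the open set $O$ if and only if $\Phi'(\mathrm{hull}\, I_1 \times \mathrm{hull}\, I_2)$ does, and pulling back through the homeomorphism $\Phi'$ this becomes the condition that $\mathrm{hull}\, I_1 \times \mathrm{hull}\, I_2$ meets the open set $(\Phi')^{-1}(O)$ of $\mathrm{Prim}(A_1) \times \mathrm{Prim}(A_2)$. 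By Lemma~\ref{L:hyper} the set of pairs $(F_1,F_2)$ of closed sets whose product meets a given open set is open, and transporting back through the correspondences realizes $\Phi^{-1}(\mathcal{U}_J)$ as an open subset of $\mathrm{Id}(A_1) \times \mathrm{Id}(A_2)$.

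I expect the main obstacle to be the careful bookkeeping of the two dualities and the density argument in the middle step: one must verify that the closure operation passing from $\Phi'(\mathrm{hull}\, I_1 \times \mathrm{hull}\, I_2)$ to $\mathrm{hull}\,\Phi(I_1,I_2)$ does not disturb the "meets an open set" condition, which is precisely where the density of $\Phi'$ in $\mathrm{Prim}(A_1 \otimes A_2)$ is indispensable. A secondary subtlety is that the maps $I_k \mapsto \mathrm{hull}\, I_k$ are only order-reversing bijections onto the closed subsets of $\mathrm{Prim}(A_k)$, so one has to confirm that they are in fact homeomorphisms for the respective topologies (which is built into the definition of $\tau_w$), so that openness is genuinely preserved when transported back and forth. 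Once these identifications are in place, the continuity of $\Phi$ follows directly from the continuity of $\varphi$ supplied by Lemma~\ref{L:hyper}.
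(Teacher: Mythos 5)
Your proof is correct and is essentially the paper's own argument: both reduce the subbasic condition $J \nsubseteq \Phi(I_1,I_2)$ to the condition that $\mathrm{hull}\,\Phi(I_1,I_2)$ meets an open set of $\Pm(A_1\otimes A_2)$, use the density of $\Phi^{\prime}(\mathrm{hull}\, I_1\times \mathrm{hull}\, I_2)$ in $\mathrm{hull}\,\Phi(I_1,I_2)$ (Corollary \ref{C:hulls}) to drop the closure, pull back through the continuous map $\Phi^{\prime}$, and conclude with Lemma \ref{L:hyper} together with the fact that the hull maps are homeomorphisms by the definition of $\tau_w$. The only blemish is a wording slip: the intersection $\mathrm{hull}\, J\cap \mathrm{hull}\,\Phi(I_1,I_2)$ should be required to be a proper subset of $\mathrm{hull}\,\Phi(I_1,I_2)$, not of $\mathrm{hull}\, J$ (the latter would express $\Phi(I_1,I_2)\nsubseteq J$); your immediately following ``i.e.'' clause states the correct condition, and that is what the rest of your argument actually uses.
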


\begin{proof}

   Let $U$ be an open subset of $\mathrm{Prim}(A_1\otimes A_2)$. Then, by Corollary \ref{C:hulls},
    \begin{multline*}
     \{(I_1,I_2)\in \mathrm{Id}(A_1)\times \mathrm{Id}(I_2) \mid \mathrm{hull} \Phi(I_1,I_2)\cap U \neq \mset\} =\\
           \{(I_1,I_2)\in \mathrm{Id}(A_1)\times \mathrm{Id}(A_2) \mid \Phi(\mathrm{hull} I_1\times
            \mathrm{hull} I_2)\cap U \neq \mset\} =\\
           \{(I_1,I_2)\in \mathrm{Id}(A_1)\times \mathrm{Id}(A_2) \mid (\mathrm{hull} I_1\times \mathrm{hull}
           I_2)\cap \Phi^{\prime-1}(U) \neq \mset\}
    \end{multline*}
   and the latter set is open in $\mathrm{Id}(A_1)\times \mathrm{Id}(A_2)$ by the continuity of $\Phi^{\prime}$ and
   Lemma \ref{L:hyper}.

\end{proof}

   It has been pointed out to the author by Professor R. J. Archbold that Lemma \ref{L:cont} yields an alternative proof
   of Lemma 1.4 in \cite{ASKS}
   that asserts the continuity of the canonical map from $\mathrm{Id}(A_1)\times \mathrm{Id}(A_2)$ to the space of
   ideals of $A_1\otimes_h A_2$, the Haagerup tensor product of the $C^*$-algebras $A_1$ and $A_2$. It is shown in
   \cite[Lemma 1.1]{ASKS} that this map is obtained by following the map $\Phi$ with the map $K\to K\cap
   (A_1\otimes_h A_2)$ from $\mathrm{Id}(A_1\otimes A_2)$ to $\mathrm{Id}(A_1\otimes_h A_2)$. The continuity of the
   latter map is established on p. 5 of \cite{ASKS} and the conclusion of \cite[Lemma 1.4]{ASKS} follows.

\begin{thm} \label{T:home}

   The map $\Phi$ is a homeomorphism of $\mathrm{Id}^{\prime}(A_1)\times \mathrm{Id}^{\prime}(A_2)$ onto its image which is
   dense in $\mathrm{Id}^{\prime}(A_1\otimes A_2)$.

\end{thm}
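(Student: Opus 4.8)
The plan is to prove three things: that $\Phi$ restricted to $\mathrm{Id}^{\prime}(A_1)\times \mathrm{Id}^{\prime}(A_2)$ is injective, that it is a homeomorphism onto its image, and that the image is dense. Continuity of $\Phi$ is already established in Lemma \ref{L:cont}, so the remaining work splits into injectivity, continuity of the inverse (i.e.\ that $\Phi$ is open onto its image), and density.

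For injectivity and the inverse, the plan is to exploit the fact that $\Phi^{\prime}$ is already known to be a homeomorphism onto a dense subset of $\Pm(A_1\otimes A_2)$. The bridge is Remark \ref{R:Phi}, which expresses $\Phi(I_1,I_2)$ as the intersection $\cap\{\Phi^{\prime}(P_1,P_2)\mid (P_1,P_2)\in \mathrm{hull}\,I_1\times \mathrm{hull}\,I_2\}$. In hull-kernel terms this says precisely that the hull of $\Phi(I_1,I_2)$ in $\Pm(A_1\otimes A_2)$ is the closure of $\Phi^{\prime}(\mathrm{hull}\,I_1\times \mathrm{hull}\,I_2)$, by Corollary \ref{C:hulls}. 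First I would recover $(I_1,I_2)$ from $\Phi(I_1,I_2)$: since $\Phi^{\prime}$ is a homeomorphism onto its image, the set $\mathrm{hull}\,I_1\times \mathrm{hull}\,I_2$ is determined as $\Phi^{\prime-1}$ of the intersection of $\mathrm{hull}\,\Phi(I_1,I_2)$ with the (dense) image of $\Phi^{\prime}$; projecting onto the two factors recovers $\mathrm{hull}\,I_1$ and $\mathrm{hull}\,I_2$, hence $I_1$ and $I_2$ by the bijective correspondence between closed subsets of $\Pm(A_k)$ and ideals of $A_k$. This simultaneously gives injectivity and an explicit formula for $\Phi^{-1}$ built from continuous ingredients, which should yield continuity of the inverse.

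For openness of $\Phi$ onto its image, the plan is to show that a subbasic open set $\{(I_1,I_2)\mid J_1\nsubseteq I_1\}$ (and symmetrically in the second variable) has open image. Using the $\tau_w$ subbase described in the introduction, being outside $\mathrm{hull}\,J_1$ in the first coordinate translates, via the product structure of $\mathrm{hull}\,\Phi(I_1,I_2)$ recovered above, into a condition of the form $\Phi^{\prime}(\mathrm{hull}\,J_1\times \Pm(A_2))\nsubseteq \mathrm{hull}\,\Phi(I_1,I_2)$, or equivalently a non-containment of some fixed ideal of $A_1\otimes A_2$ in $\Phi(I_1,I_2)$. The natural candidate is $\Phi(J_1,A_2)=\mathrm{ker}(q_{J_1}\otimes \mathrm{id})$, so I would try to verify that $J_1\nsubseteq I_1$ if and only if $\Phi(J_1,A_2)\nsubseteq \Phi(I_1,I_2)$, which exhibits the image of the subbasic set as a subbasic $\tau_w$-open set intersected with the range.

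The hard part will be the openness/inverse-continuity step, specifically the biconditional relating non-containment of $J_1$ in $I_1$ to non-containment of $\Phi(J_1,A_2)$ in $\Phi(I_1,I_2)$. The forward direction (that $J_1\subseteq I_1$ forces $\Phi(J_1,A_2)\subseteq \Phi(I_1,I_2)$) is the easy monotonicity of $\Phi$ already visible from the homomorphism argument in Lemma \ref{L:intersect}. The reverse direction is where the slice maps of Tomiyama should enter: if $J_1\nsubseteq I_1$ I would pick $a_1\in J_1\setminus I_1$, represent $A_1/I_1$ faithfully, and use a left slice map $L_f$ together with \cite[Lemma 2.12(i)]{BK} to produce an elementary tensor lying in $\Phi(J_1,A_2)$ but not in $\Phi(I_1,I_2)$, thereby witnessing the strict non-containment. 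Care is needed to stay within the proper ideals, so throughout I would keep $I_1,I_2$ proper and check that $\Phi(I_1,I_2)$ is proper, which follows since a nonzero product state survives on it. Density of the image is the most direct part: it reduces to the density of $\Phi^{\prime}(\Pm(A_1)\times \Pm(A_2))$ in $\Pm(A_1\otimes A_2)$ together with the correspondence between dense ranges at the level of primitive ideals and at the level of the full ideal spaces under $\tau_w$.
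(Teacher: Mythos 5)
Your skeleton (injectivity, openness onto the image, density) matches the theorem's needs, and your mechanism for the openness step is genuinely different from the paper's. The paper constructs an explicit left inverse $\Psi(I)=(I_{A_1},I_{A_2})$, where $I_{A_1}=\{a_1\in A_1 \mid a_1\otimes A_2\subset I\}$, proves $\Psi\circ\Phi=\mathrm{id}$ on proper ideals using \cite[Lemma 2.12(iii)]{BK} and a slice map, and then proves $\Psi$ is $\tau_w$-continuous; a continuous left inverse immediately makes $\Phi$ a homeomorphism onto its image (and $\Psi$ is reused later, in Section \ref{S:F}). You instead compute images of subbasic open sets directly, and your key biconditional is correct: for proper $I_1,I_2$ one has $J_1\nsubseteq I_1$ if and only if $\ker(q_{J_1}\otimes\mathrm{id})\nsubseteq\Phi(I_1,I_2)$. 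The forward implication is the factorization $q_{I_1}\otimes q_{I_2}=\theta\circ(q_{J_1}\otimes\mathrm{id})$ as in Lemma \ref{L:intersect}; for the converse you do not even need a representation: take $a_1\in J_1\setminus I_1$ and $a_2\in A_2\setminus I_2$, then $a_1\otimes a_2\in\ker(q_{J_1}\otimes\mathrm{id})$ while $q_{I_1}(a_1)\otimes q_{I_2}(a_2)\neq 0$ because an elementary tensor of nonzero elements is nonzero in a minimal tensor product. This exhibits the image of $\{(I_1,I_2)\mid J_1\nsubseteq I_1\}$ as the trace on the image of $\Phi$ of the subbasic $\tau_w$-set $\{K\mid \ker(q_{J_1}\otimes\mathrm{id})\nsubseteq K\}$; together with injectivity (needed to pass from subbasic to basic open sets, and which in fact follows from the same biconditional by taking $J_1=I_1$ and $J_1=I_1'$), this yields openness without ever introducing $\Psi$ or invoking \cite[Lemma 2.12(iii)]{BK}. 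Your separate injectivity argument via hulls is also sound: since $\Phi^{\prime}$ is a homeomorphism onto its image, Corollary \ref{C:hulls} gives $\Phi^{\prime-1}(\mathrm{hull}\,\Phi(I_1,I_2))=\mathrm{hull}\,I_1\times\mathrm{hull}\,I_2$ (the relative closure pulls back to the closure of an already closed set), and nonemptiness of hulls of proper ideals lets you project and recover $(I_1,I_2)$.

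Two points need repair. First, a notational slip: $\Phi(J_1,A_2)$ literally means $\ker(q_{J_1}\otimes q_{A_2})=A_1\otimes A_2$, since $q_{A_2}=0$; what your argument needs is $\Phi(J_1,\{0\})=\ker(q_{J_1}\otimes\mathrm{id})$, as your parenthetical indicates. Second, and more substantively, the density step is not yet a proof. There is no general ``correspondence'' transferring density of a range in $\Pm(A_1\otimes A_2)$ to density of a range in $\mathrm{Id}^{\prime}(A_1\otimes A_2)$ under $\tau_w$: for an arbitrary map, density of the restriction to primitive pairs says nothing about which non-primitive ideals lie in the image. What makes it work here is the intersection formula of Remark \ref{R:Phi}: given a nonempty basic open set $\{K\mid \mathrm{hull}\,K\cap\mathcal{W}_r\neq\emptyset,\ 1\leq r\leq m\}$, you must choose $\Phi^{\prime}(P^1_r,P^2_r)\in\mathcal{W}_r$ by density of the image of $\Phi^{\prime}$, form the finite intersections $I_k=\cap_{r=1}^m P^k_r$, and then use Remark \ref{R:Phi} to conclude $\Phi(I_1,I_2)\subset\Phi^{\prime}(P^1_r,P^2_r)$ for every $r$, so that $\mathrm{hull}\,\Phi(I_1,I_2)$ meets each $\mathcal{W}_r$. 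This finite-intersection construction (which is exactly the paper's closing argument, and crucially exploits Lemma \ref{L:intersect}) is the idea missing from your sketch; without it, the appeal to a ``correspondence between dense ranges'' is an unproved assertion.
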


\begin{proof}

   There is a map $\Psi$ from $\mathrm{Id}(A_1\otimes A_2)$ to $\mathrm{Id}(A_1)\times \mathrm{Id}(A_2)$ defined as
   follows: for $I\in \mathrm{Id}(A_1\otimes A_2)$ set
    \[
     I_{A_1} := \{a_1\in A_1 \mid a_1\otimes A_2\subset I\}, \quad I_{A_2} := \{a_2\in A_2 \mid A_1\otimes a_2\subset I\}
    \]
   and $\Psi(I) := (I_{A_1},I_{A_2})$, see \cite[Lemma 2.13]{BK} for instance. Then $\Psi\circ \Phi$ restricted to
   $\mathrm{Id}^{\prime}(A_1)\times \mathrm{Id}^{\prime}(A_2)$ is $\mathrm{id}$, the identity map
   of $\mathrm{Id}^{\prime}(A_1)\times \mathrm{Id}^{\prime}(A_2)$. Indeed, let $(I_1,I_2)\in
   \mathrm{Id}^{\prime}(A_1)\times \mathrm{Id}^{\prime}(A_2)$ and set $I := \Phi(I_1,I_2)$. Clearly $I_1\subset I_{A_1}$
   and $I_2\subset I_{A_2}$. Let now $a\in I_{A_1}$ and pick $b\in A_2\setminus I_2$. Then $a\otimes b\in I$
   hence by \cite[Lemma 2.12(iii)]{BK},
   $a\otimes b\in I_1\otimes A_2 + A_1\otimes I_2$. Choose a bounded linear functional $f$ on $A_2$ that vanishes on $I_2$
   and satisfies $f(b) = 1$. By using the left slice map $L_f$ we get $a = L_f(a\otimes b)\in I_1$. We obtained $I_1 =
   I_{A_1}$ and $I_2 = I_{A_2}$ is proved similarly, thus the claim is established. In particular, it follows that the
   restriction of $\Phi$ to the space considered is one to one.
   It is well known that $\Psi$ is
   continuous but we include a proof for completeness since we could find in print only proofs of the continuity of
   some restrictions of $\Psi$. Let $V_n$ be open subsets of $\Pm(A_n)$ and $K_n$ the corresponding ideals of $A_n$,
   and denote $\mathcal{U}_n := \{I_n\in \mathrm{Id}(A_n) \mid \mathrm{hull} I_n\cap V_n \neq \mset\}$, $n = 1,2$.
   Then
    \begin{multline*}
     \Psi^{-1}(\mathcal{U}_1\times \mathcal{U}_2) =
        \{I\in \mathrm{Id}(A_1\otimes A_2) \mid (\mathrm{hull} I_{A_n})\cap V_n \neq \mset, n = 1,2\} =\\
        \{I\in \mathrm{Id}(A_1\otimes A_2) \mid I_{A_n}\nsupseteq K_n, n = 1,2\} =\\
        \{I\in \mathrm{Id}(A_1\otimes A_2) \mid I\nsupseteq K_1\otimes A_2\}\cap
        \{I\in \mathrm{Id}(A_1\otimes A_2) \mid I\nsupseteq A_1\otimes K_2\}
    \end{multline*}
   and the latter is an open subset of $\mathrm{Id}(A_1\otimes A_2)$.
   From $\Psi\circ \Phi = id$ and the continuity of $\Psi$ it follows that $\Phi$ maps open subsets of
   $\mathrm{Id}^{\prime}(A_1)\times \mathrm{Id}^{\prime}(A_2)$ onto relatively
   open sets and the proof that $\Phi$ is a homeomorphism onto its image is complete.

   Let now $\mathcal{W}_r$, $1\leq r\leq m$ be open subsets of $\Pm(A_1\otimes A_2)$. There are
   $\{P^1_r\}^m_1\subset \Pm(A_1)$ and $\{P^2_r\}^m_1\subset \Pm(A_2)$ such that $\Phi^{\prime}(P^1_r,P^2_r)\in
   \mathcal{W}_r$, $1\leq r\leq m$. If we set $I_1 := \cap^m_{r=1} P^1_r$ and $I_2 := \cap^m_{r=1} P^2_r$ then
   \[
    \Phi(I_1,I_2)\in \{I\in \mathrm{Id}(A_1\otimes A_2) \mid \mathrm{hull} I\cap \mathcal{W}_r\neq \mset, \quad
   1\leq r\leq m\}
   \]
   by Remark \ref{R:Phi} and this shows that the image of $\Phi$ is dense in $\mathrm{Id}^{\prime}(A_1\otimes A_2)$.

\end{proof}

\begin{cor} \label{C:Fac}

   The restriction of $\Phi$ to $\Fa(A_1)\times \Fa(A_2)$ is a homeomorphism onto a dense subset of $\Fa(A_1\otimes
   A_2)$.

\end{cor}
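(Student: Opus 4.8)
The plan is to obtain Corollary \ref{C:Fac} from Theorem \ref{T:home} by realizing $\Fa$ as a topological subspace of $\mathrm{Id}^{\prime}$. First I would record that the topologies match up. Since the kernel of a (nonzero) factorial representation is a proper ideal, $\Fa(A_k)\subset \mathrm{Id}^{\prime}(A_k)$; a subbase for the relative $\tau_w$ topology on $\Fa(A_k)$ is given by the sets $\{K\in \Fa(A_k)\mid J\nsubseteq K\}$ with $J$ an ideal of $A_k$, and these are exactly the complements in $\Fa(A_k)$ of the hulls. Hence the relative $\tau_w$ topology coincides with the hull-kernel topology, and the same remark applies verbatim to $\Fa(A_1\otimes A_2)\subset \mathrm{Id}^{\prime}(A_1\otimes A_2)$.

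With this identification the homeomorphism assertion is immediate. We already know $\Phi(\Fa(A_1)\times \Fa(A_2))\subset \Fa(A_1\otimes A_2)$, and by Theorem \ref{T:home} the map $\Phi$ is a homeomorphism of $\mathrm{Id}^{\prime}(A_1)\times \mathrm{Id}^{\prime}(A_2)$ onto its image. Restricting a homeomorphism onto its image to a subspace yields again a homeomorphism onto its image, so, the subspace topologies being the hull-kernel topologies by the previous paragraph, $\Phi$ restricted to $\Fa(A_1)\times \Fa(A_2)$ is a homeomorphism onto its image in $\Fa(A_1\otimes A_2)$.

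It remains to prove density, and here I would pass through the primitive ideal spaces. Every irreducible representation is factorial, so $\Pm(A)\subset \Fa(A)$ for every $C^*$-algebra $A$; moreover $\Pm(A)$ is dense in $\Fa(A)$, since in the hull-kernel topology the closure of $\Pm(A)$ in $\Fa(A)$ consists of those $K\in \Fa(A)$ with $\cap\{P\mid P\in \Pm(A)\}\subset K$, and this intersection is $\{0\}$ by the semisimplicity of $C^*$-algebras, whence the closure is all of $\Fa(A)$. In particular $\Pm(A_1\otimes A_2)$ is dense in $\Fa(A_1\otimes A_2)$. On the other hand $\Phi^{\prime}(\Pm(A_1)\times \Pm(A_2))$ is dense in $\Pm(A_1\otimes A_2)$ by Wulfsohn's theorem recalled before Remark \ref{R:Phi}, and this set is contained in $\Phi(\Fa(A_1)\times \Fa(A_2))$. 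Since all three spaces in the chain $\Phi^{\prime}(\Pm(A_1)\times \Pm(A_2))\subset \Pm(A_1\otimes A_2)\subset \Fa(A_1\otimes A_2)$ carry compatible relative hull-kernel topologies, density is transitive along it, and we conclude that the image of $\Phi$ is dense in $\Fa(A_1\otimes A_2)$.

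The topology comparisons are routine bookkeeping; the step carrying real content is the density, and I expect its delicate point to be the transitivity argument, which works only because the topologies on the three nested spaces agree and because $\cap\{P\mid P\in \Pm(A_1\otimes A_2)\}=\{0\}$ lets the dense image of $\Phi^{\prime}$ inside $\Pm(A_1\otimes A_2)$ spread to a dense subset of the larger space $\Fa(A_1\otimes A_2)$.
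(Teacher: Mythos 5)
Your proposal is correct and follows essentially the same route as the paper: the homeomorphism part is obtained by restricting the homeomorphism of Theorem \ref{T:home} (using $\Phi(\Fa(A_1)\times \Fa(A_2))\subset \Fa(A_1\otimes A_2)$ and the fact that the hull-kernel topology on $\Fa$ is the relative $\tau_w$ topology, the subbasic sets being closed under finite intersections since factorial kernels are prime), and the density part is exactly the paper's stated argument, namely that $\Phi^{\prime}(\Pm(A_1)\times \Pm(A_2))$ is dense in $\Pm(A_1\otimes A_2)$ by Wulfsohn, which in turn is dense in $\Fa(A_1\otimes A_2)$ because $\cap\,\Pm(A_1\otimes A_2)=\{0\}$. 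Your write-up merely makes explicit the transitivity-of-density and topology-compatibility bookkeeping that the paper leaves to the reader.
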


\begin{cor} \label{C:Pr}

   The restriction of $\Phi$ to $\Pme(A_1)\times \Pme(A_2)$ is a homeomorphism onto a dense subset of
   $\Pme(A_1\otimes A_2)$.

\end{cor}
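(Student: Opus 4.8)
The plan is to derive Corollary \ref{C:Pr} from Theorem \ref{T:home}, reducing everything to two facts: that on the prime ideals the hull-kernel topology coincides with the subspace topology inherited from $(\mathrm{Id}(A),\tau_w)$, and that $\Pm(A)$ is dense in $\Pme(A)$. First I would identify the topologies. Restricting the $\tau_w$-subbase $\{I\in\mathrm{Id}(A)\mid J\nsubseteq I\}$ to $\Pme(A)$ yields the sets $U_J := \{P\in\Pme(A)\mid J\nsubseteq P\}$. For prime $P$ one has $I\cap J\subseteq P$ if and only if $I\subseteq P$ or $J\subseteq P$, whence $U_I\cap U_J = U_{I\cap J}$; thus the $U_J$ form a base, and it is exactly the base of the hull-kernel topology on $\Pme(A)$. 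Since prime ideals are proper, $\Pme(A_k)$ sits inside $\mathrm{Id}^{\prime}(A_k)$, and because passing to subspaces commutes with finite products, the product of the hull-kernel topologies on $\Pme(A_1)\times\Pme(A_2)$ is the subspace topology inherited from $\mathrm{Id}^{\prime}(A_1)\times\mathrm{Id}^{\prime}(A_2)$; the same identification holds in $A_1\otimes A_2$.

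Given these identifications, the homeomorphism part is immediate: by Theorem \ref{T:home} the map $\Phi$ is a homeomorphism of $\mathrm{Id}^{\prime}(A_1)\times\mathrm{Id}^{\prime}(A_2)$ onto its image, and $\Phi$ carries $\Pme(A_1)\times\Pme(A_2)$ into $\Pme(A_1\otimes A_2)$ by \cite[Lemma 2.13(v)]{BK} (as recorded after Lemma \ref{L:hyper}). The restriction of a homeomorphism to a subspace is a homeomorphism onto its image, so $\Phi$ restricts to a homeomorphism of $\Pme(A_1)\times\Pme(A_2)$ onto its image inside $\Pme(A_1\otimes A_2)$.

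For density I would run a transitivity-of-denseness argument anchored at the primitive ideals. The Jacobson radical of a $C^*$-algebra is zero, so $\cap\Pm(A)=0$, and since $\Pm(A)\subseteq\Pme(A)$ also $\cap\Pme(A)=0$; hence a basic open set $U_J$ is nonempty in $\Pm(A)$ precisely when $J\neq 0$, which happens precisely when it is nonempty in $\Pme(A)$, so $\Pm(A)$ is dense in $\Pme(A)$. Applying this inside $A_1\otimes A_2$ and combining it with the density of $\Phi^{\prime}(\Pm(A_1)\times\Pm(A_2))$ in $\Pm(A_1\otimes A_2)$ from \cite[Lemme 16]{W}, together with the inclusion $\Phi(\Pm(A_1)\times\Pm(A_2))\subseteq\Phi(\Pme(A_1)\times\Pme(A_2))$, transitivity of denseness gives that $\Phi(\Pme(A_1)\times\Pme(A_2))$ is dense in $\Pme(A_1\otimes A_2)$.

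I expect no genuine obstacle here, since the real content is imported from Theorem \ref{T:home} and \cite{W}; the only step demanding care is the bookkeeping that the hull-kernel, subspace, and product topologies all coincide, so that ``homeomorphism onto its image'' survives the restriction and the various dense inclusions are formed in mutually compatible topologies.
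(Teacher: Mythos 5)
Your proposal is correct and takes essentially the same route as the paper: the homeomorphism part is the restriction of the homeomorphism of Theorem \ref{T:home} (with $\Phi(\Pme(A_1)\times\Pme(A_2))\subseteq\Pme(A_1\otimes A_2)$ from \cite[Lemma 2.13(v)]{BK}), and density follows from the density of $\Phi^{\prime}(\Pm(A_1)\times\Pm(A_2))$ in $\Pm(A_1\otimes A_2)$ combined with the density of $\Pm(A_1\otimes A_2)$ in $\Pme(A_1\otimes A_2)$. The only difference is that you make explicit two steps the paper leaves implicit, namely the identification of the hull-kernel topology with the subspace $\tau_w$ topology on $\Pme$ and the transitivity-of-density argument.
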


The density of the images in the above corollaries follows from the density of $\Phi^{\prime}(\Pm(A_1)\times
\Pm(A_2))$ in $\Pm(A_1\times A_2)$.

Under the assumption that $A_1\otimes A_2$ has property $(F)$, Corollary \ref{C:Fac} appears as Lemma 2.5 in
\cite{H} and Corollary \ref{C:Pr} is part of Proposition 2.16 of \cite{BK}. Property $(F)$ ensures that the
restrictions of $\Phi$ are onto maps, see also Proposition \ref{P:F} below.

\section{The property $(F)$} \label{S:F}

We continue to use the same notation as in the previous section: $A_1$ and $A_2$ are $C^*$-algebras, $\Phi :
\mathrm{Id}(A_1)\times \mathrm{Id}(A_2)\to \mathrm{Id}(A_1\otimes A_2)$ is the map defined in the introduction and
$\Psi : \mathrm{Id}(A_1\otimes A_2)\to \mathrm{Id}(A_1)\times \mathrm{Id}(A_2)$ is the map defined in the proof of
Theorem \ref{T:home}. Then $\Psi(\Fa(A_1\otimes A_2))\subset \Fa(A_1)\times \Fa(A_2)$ by \cite[Proposition 1]{G} and
$\Psi(\Pme(A_1\otimes A_2))\subset \Pme(A_1)\times \Pme(A_2)$ by \cite[Lemma 2.13(i)]{BK}. It is not known if $\Psi$
maps $\Pm(A_1\otimes A_2)$ into $\Pm(A_1)\times \Pm(A_2)$, even if $A_1\otimes A_2$ has the property $(F)$. We shall
use also the map $\Delta : \mathrm{Id}(A_1)\times \mathrm{Id}(A_2)\to \mathrm{Id}(A_1\otimes A_2)$ defined by
$\Delta(I_1,I_2) := I_1\otimes A_2 + A_1\otimes I_2$. By using slice maps one easily sees that
$\Delta(\mathrm{Id}^{\prime}(A_1)\times \mathrm{Id}^{\prime}(A_2))\subset \mathrm{Id}^{\prime}(A_1\otimes A_2)$.
From \cite[Lemma 2.12(iv)]{BK} we infer that the map $\Delta$ is one to one on $\mathrm{Id}^{\prime}(A_1)\times
\mathrm{Id}^{\prime}(A_2)$.

In the the result below we collect several conditions that are equivalent to the property $(F)$. All of them appear,
possibly in a slightly different form, either in \cite[Proposition 2.2]{H} or in \cite[Proposition 2.16]{BK} and we
only provide new proofs to various implications.

\begin{prop} \label{P:F}

The following properties of $A_1\otimes A_2$ are equivalent:
 \begin{itemize}
    \item[(1)] $A_1\otimes A_2$ has property $(F)$.
    \item[(2)] For every prime ideal $I$ of $A_1\otimes A_2$ we have $I = \Phi(\Psi(I))$.
    \item[(3)] $\Phi(\Pme(A_1)\times \Pme(A_2)) = \Pme(A_1\otimes A_2)$.
    \item[(3a)] $\Delta(\Pme(A_1)\times \Pme(A_2)) = \Pme(A_1\otimes A_2)$.
    \item[(4)] For every $I\in \Fa(A_1\otimes A_2)$ we have $I = \Phi(\Psi(I))$.
    \item[(5)] $\Phi(\Fa(A_1)\times \Fa(A_2)) = \Fa(A_1\otimes A_2)$.
    \item[(5a)] $\Delta(\Fa(A_1)\times \Fa(A_2)) = \Fa(A_1\otimes A_2)$.
    \item[(6)] Every ideal $I$ of $A_1\otimes A_2$ is the closure of the linear span of the elementary tensors contained
    in $I$.
 \end{itemize}

\end{prop}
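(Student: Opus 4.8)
The plan is to prove the equivalence of conditions (1)--(6) by establishing a cycle of implications, taking advantage of the structural results already available in the excerpt: the homeomorphism theorem (Theorem \ref{T:home}), the intersection lemma (Lemma \ref{L:intersect}), and especially the slice-map machinery together with \cite[Lemma 2.12]{BK} and \cite{Al}. Since property (F) is defined here via the identity $\Phi(I_1,I_2) = \Delta(I_1,I_2)$ for \emph{all} pairs of ideals, the natural strategy is to show that (F) for all ideals can be deduced from (F) holding merely on the smaller, but topologically dense, classes of prime and factorial ideals, and conversely that the various reformulations in terms of $\Phi$, $\Psi$, and $\Delta$ are each faithful translations of this single coincidence of maps.

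First I would record the easy directions. The implication $(1)\Rightarrow(6)$ is essentially immediate: if $\Phi(I_1,I_2)=\Delta(I_1,I_2)$ always, then applying this to $\Psi(I)=(I_{A_1},I_{A_2})$ and using $\Psi\circ\Phi=\mathrm{id}$ (from Theorem \ref{T:home}) shows every ideal is a sum $I_{A_1}\otimes A_2 + A_1\otimes I_{A_2}$, which is manifestly the closed span of elementary tensors lying in it. The reverse implication $(6)\Rightarrow(1)$ would use \cite[Lemma 2.12]{BK} to identify the elementary tensors $a_1\otimes a_2$ contained in $\Phi(I_1,I_2)$ and show their closed span is exactly $\Delta(I_1,I_2)$, forcing the two maps to agree. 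For the prime and factorial reformulations, the pattern is uniform: $(2)\Leftrightarrow(3)$ and $(4)\Leftrightarrow(5)$ follow because $\Phi(\Psi(I))=I$ says precisely that $I$ is in the image of $\Phi$ restricted to the relevant class, while $\Psi(\Pme(A_1\otimes A_2))\subset\Pme(A_1)\times\Pme(A_2)$ and $\Psi(\Fa(A_1\otimes A_2))\subset\Fa(A_1)\times\Fa(A_2)$ (cited at the start of \S\ref{S:F}) guarantee $\Psi$ lands in the right factor space. The equivalences $(3)\Leftrightarrow(3a)$ and $(5)\Leftrightarrow(5a)$ reduce to showing $\Phi$ and $\Delta$ coincide on these particular ideals; for prime (or factorial) $I_1,I_2$ this is where one invokes \cite[Lemma 2.13]{BK} to see that $\Delta(I_1,I_2)$ is already prime (resp. factorial) and hence equals the prime ideal $\Phi(I_1,I_2)$ sitting above it.

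The crux of the argument, and the step I expect to be the main obstacle, is propagating (F) from the prime/factorial ideals up to \emph{all} ideals, i.e. proving something like $(3)\Rightarrow(1)$ or $(5)\Rightarrow(1)$. The idea is to fix an arbitrary pair $(I_1,I_2)$ and write $I_1=\cap_{P_1\in\mathrm{hull}\,I_1}P_1$, $I_2=\cap_{P_2\in\mathrm{hull}\,I_2}P_2$. By Lemma \ref{L:intersect}, $\Phi(I_1,I_2)=\cap\{\Phi(P_1,P_2)\}$, and if (F) holds on primes then each $\Phi(P_1,P_2)=\Delta(P_1,P_2)=P_1\otimes A_2 + A_1\otimes P_2$. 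The difficulty is that $\Delta$ does \emph{not} commute with arbitrary intersections the way $\Phi$ does---the sum of two intersections of ideals need not equal the intersection of the sums---so one cannot simply pull the intersection through $\Delta$. I would circumvent this by working at the level of elementary tensors: an element $a_1\otimes a_2$ lies in $\Phi(I_1,I_2)$ iff it lies in every $\Phi(P_1,P_2)$, iff for each such pair either $a_1\in P_1$ or $a_2\in P_2$; using that $I_k$ is the intersection of its hull and a separation/slice-map argument, one shows $a_1\otimes a_2\in\Delta(I_1,I_2)$, whence every elementary tensor in $\Phi(I_1,I_2)$ already lies in $\Delta(I_1,I_2)$. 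Combining this with the fact (\cite{Al}, via \cite[Lemma 2.12]{BK}) that ideals of the minimal tensor product are generated by the elementary tensors they contain then upgrades the coincidence to the full equality $\Phi(I_1,I_2)=\Delta(I_1,I_2)$, giving (F).

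Finally I would close the cycle by connecting the factorial and prime layers to each other and back to the top. Since $\Fa\subset\Pme$ in general and prime ideals arise as certain limits or intersections of factorial kernels, the implications $(2)\Rightarrow(4)$ and $(4)\Rightarrow(2)$ can be handled by restriction and by the density results from Corollaries \ref{C:Fac} and \ref{C:Pr}: a dense image that is also closed under the relevant operations must be the whole space. Thus the overall scheme is the cycle
\[
(1)\Rightarrow(6)\Rightarrow(1),\qquad (1)\Leftrightarrow(2)\Leftrightarrow(3)\Leftrightarrow(3a),\qquad (1)\Leftrightarrow(4)\Leftrightarrow(5)\Leftrightarrow(5a),
\]
with the genuinely substantive content concentrated in the elementary-tensor argument that powers $(3)\Rightarrow(1)$; everything else is bookkeeping that translates between the three equivalent ways of saying that $\Phi$ and $\Delta$ agree.
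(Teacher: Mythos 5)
Your proposal has two genuine gaps, and they are exactly at the points you yourself identify as carrying the weight of the proof. First, the ``crux'' argument for $(3)\Rightarrow(1)$ is circular. You invoke, as a known fact, that ``ideals of the minimal tensor product are generated by the elementary tensors they contain'' --- but that statement \emph{is} condition (6), one of the equivalent conditions of the proposition, and it is equivalent to property (F); it is not a general fact. What \cite{Al} (via \cite[Lemma 2.12]{BK}) supplies is only that every non-zero ideal \emph{contains} some non-zero elementary tensor, which is far weaker. If generation by elementary tensors held unconditionally, then by your own (correct) step $(6)\Rightarrow(1)$ every minimal tensor product would have property (F), and the proposition would be vacuous; (F) is a genuine restriction. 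The paper closes the loop differently: $(2)\Rightarrow(4)$ is trivial because $\Fa\subset\Pme$, and $(4)\Rightarrow(1)$ is quoted from Tomiyama \cite[Theorem 5]{T}. (If you want to avoid citing Tomiyama, the implication $(3)\Rightarrow(1)$ can in fact be proved directly, but by a different mechanism than yours: write $\Delta(I_1,I_2)$ as the intersection of the primitive ideals $Q$ containing it; each such $Q$ is prime, so by (3) and $\Psi\circ\Phi=\mathrm{id}$ we have $Q=\Phi(\Psi(Q))$ with $I_1\subset Q_{A_1}$, $I_2\subset Q_{A_2}$ by slice maps, and the monotonicity of $\Phi$ coming from the factorization $\theta$ in Lemma \ref{L:intersect} gives $\Phi(I_1,I_2)\subset Q$; intersecting over $Q$ yields $\Phi(I_1,I_2)\subset\Delta(I_1,I_2)$.)

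Second, your proof of $(1)\Rightarrow(6)$ rests on a false claim: that under (F) every ideal $I$ of $A_1\otimes A_2$ equals $I_{A_1}\otimes A_2+A_1\otimes I_{A_2}=\Delta(\Psi(I))$. The identity $\Psi\circ\Phi=\mathrm{id}$ from Theorem \ref{T:home} does not give $\Phi\circ\Psi=\mathrm{id}$; the latter holds for prime ideals (that is precisely condition (2)) but fails for general ideals even in the best of circumstances. Take $A_1=A_2=C([0,1])$, so that $A_1\otimes A_2\cong C([0,1]^2)$ has property (F), and let $I$ be the ideal of functions vanishing on the diagonal: then $I_{A_1}=I_{A_2}=\{0\}$, so $\Delta(\Psi(I))=\{0\}\neq I$. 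The elementary tensors contained in $I$ are all $a\otimes b$ with $ab=0$ --- a far larger supply than those produced by $I_{A_1}$ and $I_{A_2}$ --- which is exactly why (6) is a strictly weaker assertion than $I=\Delta(\Psi(I))$ and can hold for every ideal. Correspondingly, the paper's proof of $(1),(3a)\Rightarrow(6)$ is a real topological argument: for each prime $P\nsupseteq I$ it constructs ideals $J_1,J_2$ with $J_1\otimes J_2\subset I$ and $P\nsupseteq J_1\otimes J_2$, using that under (F) the map $\Delta=\Phi$ restricted to $\Pme(A_1)\times\Pme(A_2)$ is a homeomorphism onto all of $\Pme(A_1\otimes A_2)$ (Corollary \ref{C:Pr} together with (3a)). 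Finally, a smaller point: your plan to relate (2) and (4) ``by density'' (Corollaries \ref{C:Fac} and \ref{C:Pr}) cannot work, since the images in question are \emph{always} dense while their equality with the whole space is precisely what is equivalent to (F); density can never substitute for surjectivity here, whereas $(2)\Rightarrow(4)$ is simply the inclusion $\Fa(A_1\otimes A_2)\subset\Pme(A_1\otimes A_2)$.
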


\begin{proof}

   (1) $\Rightarrow$  (2). Let $I\in \Pme(A_1\otimes A_2)$ and set $(I_1,I_2) = \Psi(I)$. Then $I\subset
   \Phi(I_1,I_2) = I_1\otimes A_2 + A_1\otimes I_2$ by \cite[Lemma 2.13(iv)]{BK} and our assumption. From the
   definition of $\Psi$ follows
   $I_1\otimes A_2 + A_1\otimes I_2\subset I$ so we obtained $I = \Phi(\Psi(I))$.

   (2) $\Rightarrow$ (3). $\Phi(\Pme(A_1)\times \Pme(A_2))\subset \Pme(A_1\otimes A_2)$ by \cite[Lemma 2.13(v)]{BK}
   and $\Psi(\Pme(A_1\otimes A_2))\subset \Pme(A_1)\times \Pme(A_2)$ by \cite[Lemma 2.13(i)]{BK} so (3) follows
   immediately from (2).

   (3) $\Rightarrow$ (2). Let $I\in \Pme(A_1\otimes A_2)$. Then $I = \Phi(I_1,I_2)$ with $(I_1,I_2)\in \Pme(A_1)\times
   \Pme(A_2)$. As shown in the proof of Theorem \ref{T:home}, $\Psi\circ \Phi$ is the identity map of
   $\mathrm{Id}^{\prime}(A_1)\times \mathrm{Id}^{\prime}(A_2)$ hence $\Psi(I) = (I_1,I_2)$ and (2) obtains.

   (2) $\Rightarrow$ (4) is trivial and (4) $\Rightarrow$ (1) is part of \cite[Theorem 5]{T}. Now, clearly (1) and
   (3) imply (3a).

   (3a) $\Rightarrow$ (2). Let $I$ be a prime ideal of $A_1\otimes A_2$ and set $\Psi(I) = (I_1,I_2)\in
   \Pme(A_1)\times \Pme(A_2)$, by \cite[Lemma 2.13(i)]{BK}. Then $J := \Phi(I_1,I_2)\in \Pme(A_1\otimes A_2)$ by
   \cite[Lemma 2.13(v)]{BK} and there exist $J_1\in \Pme(A_1), \; J_2\in \Pme(A_2)$ such that $\Delta(J_1,J_2) = J =
   \Phi(I_1,I_2)$. Then, again by \cite[Lemma 2.13(v)]{BK}, $J_1 = I_1, \; J_2 = I_2$. Now, $\Delta(I_1,I_2)\subset
   I\subset \Phi(I_1,I_2) = \Delta(I_1,I_2)$, the second inclusion following from \cite[Lemma 2.13(iv)]{BK}. Thus we
   got $I = \Phi(I_1,I_2) = \Phi(\Psi(I))$.

   (4) $\Leftrightarrow$ (5) is obtained as (2) $\Leftrightarrow$ (3) above by using \cite[p. 6 and Proposition 1]{G}.
   (1), (5) $\Rightarrow$ (5a) is obvious. (5a) $\Rightarrow$ (4) can be deduced, with obvious changes, as
   (3a) $\Rightarrow$ (2).

   (1), (3a) $\Rightarrow$ (6). For a $C^*$-algebra $A$ and an ideal $K$ of $A$ we shall denote by $\Pme(K)$ the open
   subset of $\Pme(A)$ consisting of all the prime ideals that do not contain $K$.

   Let $I$ be an ideal of $A_1\otimes A_2$ and pick $P\in \Pme(I)$. We claim that
   there exist ideals $J_1$ of $A_1$ and $J_2$ of $A_2$ such that $P\in \Pme(J_1\otimes J_2)$ and $\Pme(J_1\otimes
   J_2)\subset \Pme(I)$. Once this claim is established then we can assert that the ideal $I^{\prime}$ generated by
   all the elementary tensors contained in $I$ satisfies $\Pme(I^{\prime}) = \Pme(I)$ hence $I^{\prime} = I$ and we
   are done.

   From the assumption on $A_1\otimes A_2$ and Corollary \ref{C:Pr} it follows that $\Phi = \Delta$ and there are open
   subsets $U_m$ of $\Pme(A_m)$ such that $P\in \Delta(U_1\times U_2)$ and $\Delta(U_1\times U_2)\subset \Pme(I)$.
   Denote by $J_m$ the ideal of $A_m$ that satisfies $\Pme(J_m) = U_m$. To substantiate the above claim it remains
   to show that $\Pme(J_1\otimes J_2) = \Delta(U_1\times U_2)$. Now, if $Q\in \Pme(J_1\otimes J_2)$ that is, $Q\in
   \Pme(A_1\otimes A_2)$ but $Q\nsupseteq J_1\otimes J_2$, and $Q = \Delta(Q_1,Q_2) = Q_1\otimes A_2 + A_1\otimes Q_2$
   with $(Q_1,Q_2)\in \Pme(A_1)\times \Pme(A_2)$ then $Q_1\nsupseteq J_1$ and $Q_2\nsupseteq J_2$. Hence
   $\Pme(J_1\otimes J_2)\subseteq \Delta(U_1\times U_2)$. On the other hand, we are going to show that if $Q\in
   \Pme(A_1\otimes A_2)$ contains $J_1\otimes J_2$ and $Q_1, Q_2$ are the prime ideals of $A_1, A_2$, respectively,
   that satisfy $Q = \Delta(Q_1,Q_2)$ then $J_1\subset Q_1$ or $J_2\subset Q_2$. So suppose $J_1\otimes J_2\subset
   Q_1\otimes A_2 + A_1\otimes Q_2$ and assume that there exists $a_2\in J_2\setminus Q_2$. Choose a bounded linear
   functional $f$ on $A_2$ that vanishes on $Q_2$ but $f(a_2) = 1$. Then if $a_1\in J_1$, we have $a_1 =
   L_f(a_1\otimes a_2)\subset Q_1$ hence $J_1\subset Q_1$.

   (6) $\Rightarrow$ (1). Let $I_m$ be an ideal of $A_m$. By our assumption on the ideals of $A_1\otimes A_2$ and
   \cite[Lemma 2.12(iii)]{BK} we have $\Phi(I_1,I_2) = \Delta(I_1,I_2)$.

\end{proof}

\bibliographystyle{amsplain}
\bibliography{}

\end{document}